\newtheorem{thm}{Theorem}[section]
\newtheorem{cor}[thm]{Corollary}
\newtheorem{lem}[thm]{Lemma}
\newtheorem{rem}[thm]{Remark}
\newcommand{\be}{\begin{equation}}
\newcommand{\ee}{\end{equation}}
\newcommand{\ben}{\begin{enumerate}}
\newcommand{\een}{\end{enumerate}}
\newcommand{\beq}{\begin{eqnarray}}
\newcommand{\eeq}{\end{eqnarray}}
\newcommand{\beqn}{\begin{eqnarray*}}
\newcommand{\eeqn}{\end{eqnarray*}}
\newcommand{\w}{{\wedge}}
\newcommand{\la}{{\langle}}
\newcommand{\ra}{{\rangle}}
\begin{document}

\title[simons' type formula and contact angle]{\bf  A simons' type formula for cmc surfaces in homogeneous $3$-manifolds}

\author{Ningwei Cui}
\footnotetext{\textit{Mathematics subject classification:}
Primary 53C24; Secondary 53C20, 53C42.} \footnotetext{\textit{Key words  and phrases:} Homogeneous manifold, cmc surfaces, Hopf cylinder, Hopf torus,  Berger sphere, contact angle, Simons' formula.}
 \footnotetext{This work is partially supported by NSFC (No. 11401490).}
\date{}

\address{address:
School of Mathematics, Southwest Jiaotong
 University, Chengdu, 610031, P.R.China.}
 \email{ningweicui@gmail.com}

\maketitle
\begin{abstract}
{In this paper, we give a Simons' type formula for the cmc surfaces in homogeneous $3$-manifolds $E(\kappa,\tau)$, $\tau\neq0$. As an application, we give a rigidity result in the case of  $\kappa> 4\tau^2$ for the cmc surfaces under a pinching assumption of the second fundamental form.}
\end{abstract}

\section{Introduction}

It is well known that the simply connected 3-dimensional homogeneous Riemannian manifolds have the isometry groups of dimension $3$, $4$ and $6$. When the dimension of the isometry group is $6$, then we have a space form.  When the dimension of the isometry group is $3$, then the manifold has the geometry of the Lie group $Sol_3$. 
 We denote $E(\kappa,\tau)$, $\kappa\neq 4\tau^2$, as the homogeneous $3$-manifolds whose isometry groups are of dimension $4$, which are fibrations over  $2$-dimensional simply connected space forms  $\Bbb{M}^2(\kappa)$ of constant curvature $\kappa$.  In other words, there exists a Riemannian submersion $\Pi:E(\kappa,\tau)\rightarrow\Bbb{M}^2(\kappa)$, where the constant number $\tau$ is the bundle curvature. The fibers are geodesics and there exists a one-parameter family of translations along the fibers, generated by a unit Killing vector field $\xi$.
When $\tau=0$, we get a product manifold $\Bbb{M}^2(\kappa)\times \Bbb{R}$. When $\tau\neq0$, the manifolds are of three types:  the Berger sphere  $\Bbb{S}_{\kappa,\tau}^3$ ($\kappa>0$),  the Heisenberg Group $Nil_3$ ($\kappa=0$) and  $P\widetilde{SL_2}$
($\kappa<0$).  The so called {\em Hopf cylinder} is defined as the preimage $\Pi^{-1}(\gamma)$ of a regular closed curve $\gamma$ in $\Bbb{M}^2(\kappa)$. When considering $\Bbb{S}_{\kappa,\tau}^3$ and $P\widetilde{SL_2}$ (the fibers are circles), the corresponding Hopf cylinder is also called the {\em Hopf torus}. For more details on $E(\kappa,\tau)$, we refer to \cite{Da}.
The surface with constant mean curvature $H$ is called the cmc $H$-surface. There are a lot of researches on the geometry of  $E(\kappa,\tau)$, for instance (\cite{AbRosenb}, \cite{AbRosenb2}, \cite{BaFe}, \cite{brs}, [9-11],  \cite{Hu}, [17-21]), among others.   A study on the cmc  $H$-surfaces  in more general spaces can be found in \cite{MePe}.

  In this paper we focus on $E(\kappa,\tau)$, $\tau\neq 0$, and we get a Simons' type formula for cmc $H$-surfaces  which can be  stated as follows. Notice that the method used in this paper is elementary.
%t is well known that the tangent vector field of the orbits of the isometry group acting  on $E(\kappa,\tau)$  is a  Killing vector field $\xi$ of unit length. 

\begin{thm}\label{mainthm}
Let $f:M\rightarrow E(\kappa,\tau)$ ($\tau\neq 0$) be an immersion of a compact cmc $H$-surface. Denote $\Phi:=A-HI$ and the angle function by $C:=\la \xi,N\ra$, where $N$ is a unit normal vector field on $M$ and $A$ is the second fundamental form. Then it holds the Simons' type formula
\be \int_{M}\Big\{|\Phi|^4-[2(H^2+\tau^2)+(\kappa-4\tau^2)(5C^2-1)]|\Phi|^2+2(\kappa-4\tau^2)(H^2+\tau^2)(3C^2-1)\Big\}d\sigma \geq0, \label{Simon}\ee
where the equality holds if and only of $f$ is of parallel second fundamental form, and in this case $M$ is a Hopf cylinder with $|A|^2=2(2H^2+\tau^2)$.
\end{thm}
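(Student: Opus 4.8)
The plan is to run the classical Bochner--Simons argument, feeding in the explicit geometry of $E(\kappa,\tau)$ and using throughout that $M$ is a surface. First I would record the ambient data: the curvature tensor of $E(\kappa,\tau)$ is the explicit algebraic tensor
\begin{multline*}
\bar R(X,Y)Z=(\kappa-3\tau^2)\bigl(\langle Y,Z\rangle X-\langle X,Z\rangle Y\bigr)\\
-(\kappa-4\tau^2)\bigl(\langle Y,\xi\rangle\langle Z,\xi\rangle X-\langle X,\xi\rangle\langle Z,\xi\rangle Y+\langle X,\xi\rangle\langle Y,Z\rangle\xi-\langle Y,\xi\rangle\langle X,Z\rangle\xi\bigr)
\end{multline*}
(see \cite{Da}), and the vertical unit Killing field satisfies $\bar\nabla_X\xi=\tau\,X\times\xi$. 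Writing $\xi=T+CN$ with $T=\xi^{\top}$ and $|T|^2=1-C^2$, I extract the structure equations to be used repeatedly: $\nabla_XT=C\,AX-\tau C\,JX$ with $J$ the rotation by $\tfrac{\pi}{2}$ on $TM$, $\nabla C=-AT-\tau JT$, the ambient sectional curvature $\bar K(T_pM)=\tau^2+(\kappa-4\tau^2)C^2$, the Gauss equation $K_M=\tau^2+(\kappa-4\tau^2)C^2+H^2-\tfrac12|\Phi|^2$ (using $\det A=H^2-\tfrac12|\Phi|^2$ on a surface), the Codazzi equation $(\nabla_XA)Y-(\nabla_YA)X=(\kappa-4\tau^2)C\bigl(\langle Y,T\rangle X-\langle X,T\rangle Y\bigr)$, and its trace $\operatorname{div}\Phi=\operatorname{div}A=(\kappa-4\tau^2)C\,T$, where the hypotheses that $H$ is constant and $\Phi=A-HI$ enter.

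Next I would carry out the pointwise Simons computation. Since $\nabla\Phi=\nabla A$, I differentiate the Codazzi equation a second time, commute covariant derivatives (picking up the intrinsic curvature of $M$, which the Gauss equation rewrites in terms of $A$, $C$ and the ambient constants), and obtain an identity of the shape
\[\tfrac12\Delta|\Phi|^2=|\nabla\Phi|^2+\langle\Phi,\operatorname{Hess}u\rangle+\mathcal Q+\mathcal L,\]
where $u$ is the function built from the Codazzi correction, $\mathcal Q$ is a combination of $|\Phi|^4$, $H^2|\Phi|^2$ and $C^2|\Phi|^2$, and $\mathcal L$ collects terms that are first order in $C$ and $T$. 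Here I exploit $\dim M=2$: trace-freeness of $\Phi$, the Cayley--Hamilton relation $\Phi^2=\tfrac12|\Phi|^2\,I$, and $\operatorname{tr}\Phi^3=0$ collapse the algebraic $A$-terms to the three indicated monomials with explicit coefficients, while $\langle\Phi,A\rangle=|\Phi|^2$ and $\langle\Phi,J\rangle=0$ simplify every contraction against $\nabla T$.

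Then I would integrate over the compact $M$. Since $\int_M\Delta|\Phi|^2=0$, the identity becomes $\int_M|\nabla\Phi|^2=-\int_M\langle\Phi,\Delta\Phi\rangle\ge0$, which is the source of \eqref{Simon}. The Hessian term is removed by integrating by parts against $\operatorname{div}\Phi=(\kappa-4\tau^2)C\,T$, leaving only algebraic integrands in $|\Phi|^2$, $C^2$, $|T|^2=1-C^2$ and the Hopf-type contractions $\langle\Phi T,T\rangle$, $\langle\Phi T,JT\rangle$. These last two I eliminate globally by means of $\int_M\operatorname{div}\bigl(C\,\Phi(T)\bigr)\,d\sigma=0$, which, after substituting $\nabla C=-AT-\tau JT$, $\operatorname{div}(\Phi T)=C|\Phi|^2+(\kappa-4\tau^2)C(1-C^2)$ and $|\Phi T|^2=\tfrac12|\Phi|^2|T|^2$, expresses $\int_M\bigl(H\langle\Phi T,T\rangle+\tau\langle\Phi T,JT\rangle\bigr)$ through $\int_M C^2|\Phi|^2$, $\int_M|\Phi|^2$ and $\int_M(\kappa-4\tau^2)C^2(1-C^2)$. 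Collecting the contributions, the $C^4$ pieces cancel and what remains is precisely the integrand of \eqref{Simon}: the coefficient $5C^2-1$ appears by combining the $\tfrac12|\Phi|^2$ coming from $|\Phi T|^2=\tfrac12|\Phi|^2|T|^2$ with the $2C^2|\Phi|^2$ produced by the differentiated Codazzi term, and $2(\kappa-4\tau^2)(H^2+\tau^2)(3C^2-1)$ assembles the $\Phi$-free remainder.

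For the equality case, $\int_M|\nabla\Phi|^2=0$ forces $\nabla\Phi\equiv0$, i.e.\ parallel second fundamental form; the Codazzi equation then gives $(\kappa-4\tau^2)C\,T\equiv0$, hence $CT\equiv0$, and since $C\equiv\pm1$ would make $M$ everywhere tangent to the horizontal distribution, which is non-integrable because $\tau\neq0$, we must have $C\equiv0$, which characterizes $M$ as a Hopf cylinder. On it the structure equations with $C\equiv0$ give $\nabla T\equiv0$ and pin down $A$ to $\left(\begin{smallmatrix}0&-\tau\\-\tau&2H\end{smallmatrix}\right)$ in the parallel orthonormal frame $\{\xi,J\xi\}$, so $\det A=-\tau^2$ and $|A|^2=(2H)^2-2\det A=2(2H^2+\tau^2)$ (equivalently $K_M=0$), as asserted. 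I expect the real obstacle to be the middle two steps: tracking every ambient-curvature term through the second covariant derivative and the Gauss substitution, and then matching coefficients after the two integrations by parts so that the $\langle\Phi T,T\rangle$, $\langle\Phi T,JT\rangle$ and $C^4$ terms cancel exactly and the stated polynomial in $C^2$ emerges; conceptually the scheme is standard, and the whole difficulty is sign and coefficient bookkeeping, for which the surface identities $\Phi^2=\tfrac12|\Phi|^2I$, $\langle\Phi,J\rangle=0$ and the single divergence relation above are the essential simplifications.
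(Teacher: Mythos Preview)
Your plan is correct in outline and would yield the theorem, but it is a genuinely different argument from the paper's. You run the classical invariant Simons computation: differentiate the Codazzi equation, commute, substitute Gauss, then integrate and use the divergence identity $\int_M\operatorname{div}(C\,\Phi T)=0$ together with $\operatorname{div}\Phi=(\kappa-4\tau^2)CT$ and the surface algebra $\Phi^2=\tfrac12|\Phi|^2I$, $\operatorname{tr}\Phi^3=0$ to collapse everything. This is essentially the route of \cite{Hu} in the minimal case, extended to constant $H$. The paper instead works entirely in the adapted frame of \cite{CuiGomes}: it expresses $h_{ij}$, $|\Phi|^2$, $|\nabla A|^2$ and $\Delta|\Phi|^2$ explicitly as functions of the contact angle $\beta$ and its derivatives $\beta_1,\beta_{11},\beta_{111}$, uses the Codazzi equations to reduce $\beta_{11},\beta_{111}$ to expressions in $\beta,\beta_1$, and then combines with a concrete divergence identity $\tfrac12\Delta|T|^2-\operatorname{div}(\nabla_TT)=2\tau(\beta_1\cos^2\beta+2\tau\sin^2\beta)$ to obtain a \emph{pointwise} formula (their (3.9)) whose integral is exactly \eqref{Simon}. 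Your approach is more conceptual and closer to the standard Simons literature; the paper's is more elementary and hands-on, and has the advantage of producing a local identity rather than only an integral one, at the cost of heavier coordinate computations.

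Two small remarks on your sketch. First, the phrase ``$\langle\Phi,\operatorname{Hess}u\rangle$'' is imprecise: since $H$ is constant the genuine Hessian term $\nabla^2(\operatorname{tr}A)$ vanishes, and what actually appears is the covariant derivative of the Codazzi right-hand side $(\kappa-4\tau^2)C(\langle Y,T\rangle X-\langle X,T\rangle Y)$, which you correctly handle later via $\nabla C$ and $\nabla T$; just be careful not to package it as a Hessian. Second, your equality-case argument is cleaner than the paper's (which appeals to the explicit frame formulas \eqref{beta11} and \eqref{Asquare}): from $\nabla A\equiv0$ and Codazzi you get $(\kappa-4\tau^2)CT\equiv0$, hence $C\in\{-1,0,1\}$ pointwise, hence $C$ is constant by continuity, and $C\equiv\pm1$ is ruled out by non-integrability of $\xi^\perp$ when $\tau\neq0$; this is correct and uses $\kappa\neq4\tau^2$, which is part of the standing hypothesis on $E(\kappa,\tau)$.
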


\begin{rem}  It is well known that the compact minimal immersed surface $M$ in the unit sphere $\Bbb{S}^3$ satisfies the Simons'  integral inequality \[\int_M|A|^2(|A|^2-2)d\sigma\geq 0,\]
where the equality holds if and only of $M$ is the great $2$-sphere or the Clifford torus (\cite{CCK}).
\end{rem}

\begin{rem} For the minimal surface in $E(\kappa,\tau)$ ($\tau\neq 0$), $H=0$,  the corresponding Simons' type formula was recently given in \cite{Hu}. \end{rem}

\section{Preliminaries}

For an immersed surface $M$ in $E(\kappa,\tau)$, define the {\it contact angle} $\beta(p)$ at $p\in M$ by the angle between the vector $\xi(p)$ and the tangent space $T_pM$. By definition, the contact function $C=\sin\beta$. Since  the distribution $\la\xi\ra^\perp$ in $E(\kappa,\tau)$ is not integrable (see \cite{Aebischer}), we consider the set $\mathcal{W}=\{p\in M: \beta(p)\neq \pm\frac{\pi}{2}\}\subset M$,  whose complementary $\mathcal{W}^{C}=\{p\in M: \beta(p)= \pm\frac{\pi}{2}\}$  has empty interior  by Frobenius Theorem.  In this paper we will work  by making use of the method of moving frames on $\mathcal{W}$ and then extend the formulas to the whole surface by the continuity.

\subsection{Adapted frame} 
In this section we introduce the method of choosing an appropriate  orthonomal  frame $\{e_1,e_2,e_3\}$ on $\mathcal{W}\subset M$ to study its geometry.  It is well known that $E(\kappa,\tau)$  admits an orthonormal frame $\{f_1,f_2,f_3\}$ with $f_3=\xi$, satisfying (see \cite{Da}) \[[f_1,f_2]=-2\tau f_3,\ \ \ [f_2,f_3]=-\kappa/(2\tau) f_1\ \ \ \mbox{and}\ \ \ [f_3,f_1]=-\kappa/(2\tau)  f_2.\label{framelee}\]
 It is straightforward to verify that the coframe $\{w^1,w^2,w^3\}$ of $\{f_1,f_2,f_3\}$ satisfies 
\[ dw^1=\kappa/(2\tau)  w^2\w w^3,\ \ \ dw^2=\kappa/(2\tau) w^3\w w^1\ \ \ \mbox{and}\ \ \ dw^3=2\tau w^1\w w^2,\label{BBBB}\] and the Levi-Civita connection $1$-forms are given by
\[ w^1_2=(\kappa/\tau-\tau)w^3,\ \ \ w^1_3=-\tau w^2\ \ \ \mbox{and}\ \ \ w^2_3=\tau w^1.\label{connection}\]
An interesting observation (see Lemma 1 in \cite{CuiGomes}) is  that,  if one rotates $f_1$ and  $f_2$ but leaves $f_3$ fixed,  the relation of $w^1_3$ and  $w^2_3$ will not change ($w^1_2$ may change). Thus,  the connection 1-forms can be  summarized as
\be\label{w_i^j}
w^1_2,\ \ \  w^1_3=-\tau w^2\ \ \ \mbox{and}\ \ \ w^2_3=\tau w^1.
\ee
If one studies a surface $M$ isometrically immersed in $E(\kappa,\tau)$ one can firstly choose $\{f_1,f_2,f_3\}$ by rotating ${f_1,f_2}$ and leaving $f_3$ fixed at $p\in \mathcal{W}$ such that $f_1\in T_pM$,  and then choose the local orthonormal frame $\{e_1,e_2,e_3\}$, called the {\em adapted frame},  where
\be \label{framesurface}
e_1=f_1, \ \ \ e_2=\sin\beta f_2+\cos\beta f_3 \ \ \ \mbox{and}\ \ \  e_3=-\cos\beta f_2+\sin\beta f_3,
\ee so that $e_1$ and $e_2$ are tangent to the surface. This frame was firstly introduced for $\Bbb{S}^3$ in \cite{Gomes} and for $E(\kappa,\tau)$ in \cite{CuiGomes}.

\subsection{Structure equations} In this section we study the structure equations under the adapted frame in the open set $\mathcal{W}\subset M$ in $E(\kappa,\tau)$. The corresponding coframe $\{\theta^1,\theta^2,\theta^3\}$ is given by
\be \label{coframe}
\theta^1=w^1, \ \ \ \theta^2=\sin\beta w^2+\cos\beta w^3\ \ \ \mbox{and}\ \ \  \theta^3=-\cos\beta w^2+\sin\beta w^3.
\ee
The structure equations of $E(\kappa,\tau)$ are given by
\begin{equation*}
\bar{\Omega}^A_B=d\theta^A_B+\theta^A_C\w\theta^C_B\ \ \ \mbox{and}\ \ \ d\theta^A =-\theta^A_B\w\theta^B, \ \ \mbox{with}\ \ \theta^A_B+\theta^B_A=0.
\end{equation*}
Here and  from now on, we assume $1\leq A,B,\ldots\leq 3$,  $1\leq i,j,\ldots\leq 2$ and we will use the Einstein summation convention.
Since, when restricted to $M$, $\theta^3=0$ we get that $\theta^3_i\w\theta^i=0$. The Cartan's lemma thus implies $\theta^3_i=h_{ij}\theta^j$ with smooth functions $h_{ij}=h_{ji}$.
We now immediately deduce the structure equations of $M$
\begin{equation}\label{gauss11}
d\theta^i = - \theta^i_j\w\theta^j, \ \ \mbox{with}\ \  \theta^i_j+\theta^j_i=0\ \ \mbox{and}\ \
\Omega^i_j := d\theta^i_j+ \theta^i_k\w\theta^k_j=\bar{\Omega}^i_j-\theta^i_3\w\theta^3_j,
\end{equation}
where the last equation is called the {\it Gauss equation}. For $A=3$ and $B=i$ one gets the {\it Codazzi equations}:
\begin{equation} \label{codazzi11}
\bar{\Omega}^3_i=d\theta^3_i+ \theta^3_k\w\theta^k_i=d(h_{ij}\theta^j)+ h_{kj}\theta^j\w\theta^k_i=h_{ij|k}\theta^k\w\theta^j,
\end{equation}
where
\be \label{KKKK}
h_{ij|k}\theta^k:=dh_{ij}- h_{kj}\theta^k_i- h_{ik}\theta^k_j.
\ee
By using the classical notation $\bar{\Omega}^A_B={1\over2}\bar{R}^A_{BCD}\theta^C\w\theta^D\,$ and $\,\Omega^i_j={1\over2}R^i_{jkl}\theta^k\w\theta^l$
we get that the Gauss equation \eqref{gauss11} and Codazzi equation \eqref{codazzi11} are respectively given by
\be \label{Gauss12}
R^i_{jkl}=\bar{R}^i_{jkl}+h_{ik}h_{jl}-h_{ij}h_{kl},\nonumber
\ee
\be \label{Codazzi12}
h_{ij|k}-h_{ik|j}=-\bar{R}^3_{ijk}.\nonumber
\ee
%The Gaussian curvature $K$ and the mean curvature $H$ of $M$ are respectively given by
%\begin{equation*}
%K=R_{1212}:=R^1_{212} \ \ \ \mbox{and}\ \ \  2H=h_{11}+h_{22}.
%\end{equation*}

Throughout this paper, for simplicity of notation we denote $\beta_i=e_i(\beta)$, $\beta_{ij}=e_je_i(\beta)$ and $\beta_{ijk}=e_ke_je_i(\beta)$. The following lemma was actually proved in \cite{CuiGomes}, that gives us the  Levi-Civita connection on the surface $M$ and the second fundamental form  under the adapted frame. For the readers' convenience we give a proof here.

\begin{lem} (\cite{CuiGomes})\label{second}  Let $\nabla$ be the Levi-Civita connection on $M$. Then \[\nabla_{e_1}e_1=-(2H+\beta_2)\tan\beta e_2,\ \ \ \ \ \  \ \ \nabla_{e_2}e_2=-(2\tau+\beta_1)\tan\beta e_1,\] 
\be \nabla_{e_2}e_1=(\beta_1+2\tau)\tan\beta e_2\ \ \ \  \mbox{and}\ \  \ \ \nabla_{e_1}e_2=(2H+\beta_2)\tan\beta e_1,\label{NB}\ee
where $H:=(h_{11}+h_{22})/2$ is the mean curvature.
Moreover, the coefficients of the second fundamental form of $M$ are given by
\be \label{sec}
h_{11}=2H+\beta_2,\ \ \ h_{12}=h_{21}=-\tau-\beta_1 \ \ \ \mbox{and}\ \ \ h_{22}=-\beta_2.
\ee
Also, we have the relation 
\be \label{mean}
\cos\beta w^1_2(e_1)=2H+\beta_2.
\ee

\end{lem}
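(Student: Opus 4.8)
The plan is to compute the ambient covariant derivatives $\bar{\nabla}_{e_i}e_j$ of the adapted frame, where $\bar{\nabla}$ denotes the Levi--Civita connection of $E(\kappa,\tau)$, and then to split each of them into the component tangent to $M$ --- which is $\nabla_{e_i}e_j$ by the Gauss formula --- and the component along $N=e_3$, which is $h_{ij}N$.

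The input is the observation behind \eqref{w_i^j}: rotating $f_1,f_2$ while fixing $f_3$ leaves $w^1_3=-\tau w^2$ and $w^2_3=\tau w^1$ unchanged, while $w^1_2$ becomes an \emph{a priori} undetermined $1$-form. Consequently, for every tangent vector $X$,
\begin{align*}
\bar{\nabla}_X f_1&=-w^1_2(X)f_2+\tau w^2(X)f_3,\qquad \bar{\nabla}_X f_2=w^1_2(X)f_1-\tau w^1(X)f_3,\\
\bar{\nabla}_X f_3&=-\tau w^2(X)f_1+\tau w^1(X)f_2.
\end{align*}
From \eqref{framesurface} I would record $w^1(e_1)=1$, $w^1(e_2)=w^2(e_1)=0$ and $w^2(e_2)=\sin\beta$, invert \eqref{framesurface} into $f_2=\sin\beta\,e_2-\cos\beta\,e_3$ and $f_3=\cos\beta\,e_2+\sin\beta\,e_3$, and note $e_i(\sin\beta)=\beta_i\cos\beta$, $e_i(\cos\beta)=-\beta_i\sin\beta$.

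I would then run the four short computations. Since $e_1=f_1$ one gets at once $\bar{\nabla}_{e_1}e_1=-w^1_2(e_1)f_2$ and $\bar{\nabla}_{e_2}e_1=-w^1_2(e_2)f_2+\tau\sin\beta\,f_3$; differentiating $e_2=\sin\beta\,f_2+\cos\beta\,f_3$ and substituting the formulas above yields $\bar{\nabla}_{e_1}e_2$ and $\bar{\nabla}_{e_2}e_2$, with the terms transverse to $e_3$ collapsing via $\sin^2\beta+\cos^2\beta=1$. Rewriting everything in the basis $\{e_1,e_2,e_3\}$ and reading off the $e_3$-components gives $h_{11}=\cos\beta\,w^1_2(e_1)$ --- which is already \eqref{mean} --- together with $h_{22}=-\beta_2$ and $h_{12}=h_{21}=-\tau-\beta_1$; the value of $h_{12}$ is visible directly in the $e_3$-component of $\bar{\nabla}_{e_1}e_2$, and equating it with the $e_3$-component of $\bar{\nabla}_{e_2}e_1$ additionally forces $\cos\beta\,w^1_2(e_2)=-\tau(1+\sin^2\beta)-\beta_1$. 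Then the mean-curvature normalisation $2H=h_{11}+h_{22}$ together with $h_{22}=-\beta_2$ gives $h_{11}=2H+\beta_2$, which completes \eqref{sec} and \eqref{mean}.

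It remains only to substitute back into the tangential parts. Using $w^1_2(e_1)=(2H+\beta_2)/\cos\beta$ in $\bar{\nabla}_{e_1}e_1$ and $\bar{\nabla}_{e_1}e_2$ produces $\nabla_{e_1}e_1=-(2H+\beta_2)\tan\beta\,e_2$ and $\nabla_{e_1}e_2=(2H+\beta_2)\tan\beta\,e_1$, and using $\cos\beta\,w^1_2(e_2)=-\tau(1+\sin^2\beta)-\beta_1$ in $\bar{\nabla}_{e_2}e_1$ and $\bar{\nabla}_{e_2}e_2$ --- where the $\tau$-terms combine through the identity $1+\sin^2\beta+\cos^2\beta=2$ --- produces $\nabla_{e_2}e_1=(2\tau+\beta_1)\tan\beta\,e_2$ and $\nabla_{e_2}e_2=-(2\tau+\beta_1)\tan\beta\,e_1$, which is \eqref{NB}. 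All identities hold pointwise on $\mathcal{W}$, which is all that is needed. I do not expect a genuine obstacle here: the only point demanding care is the bookkeeping after the rotation --- only $w^1_3$ and $w^2_3$ retain their special form, so $w^1_2(e_1)$ and $w^1_2(e_2)$ must be carried as unknowns until the mean-curvature normalisation and the symmetry $h_{12}=h_{21}$ pin them down --- after which everything is elementary trigonometry.
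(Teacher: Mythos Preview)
Your proposal is correct and follows essentially the same approach as the paper. The only organisational difference is that the paper extracts $\theta^1_2$ from the first structure equations $d\theta^i=-\theta^i_j\wedge\theta^j$ and then reads off $h_{ij}$ by computing $\bar D e_3$ (a Weingarten-type step), whereas you compute $\bar\nabla_{e_i}e_j$ for $i,j\in\{1,2\}$ directly and split via the Gauss formula; in particular, the relation $\cos\beta\,w^1_2(e_2)=-\tau(1+\sin^2\beta)-\beta_1$ that the paper obtains from $d\theta^3=0$ you obtain from the symmetry $h_{12}=h_{21}$, but the underlying algebra is identical.
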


\begin{proof}
Since $\theta^3=0$ on $M$ we have
\[\cos\beta w^2=\sin\beta w^3,\]
which together with the expression \eqref{coframe} for $\theta^2$ gives us
\be \label{cof2}
\theta^1=w^1,\ \ \ \cos\beta\theta^2= w^3\ \ \ \mbox{and}\ \ \  \sin\beta\theta^2=w^2.
\ee
So, from equations \eqref{w_i^j} and \eqref{coframe} we have
\begin{eqnarray*}
d\theta^1 &=& dw^1=-w^1_2\w w^2-w^1_3\w w^3=-\sin\beta w^1_2\w \theta^2,\\
d\theta^2 &=& \sin\beta dw^2+\cos\beta dw^3 = \sin\beta(w_2^1-\tau\cos\beta\theta^2)\w \theta^1,\\
d\theta^3 &=& d\beta\w \theta^2-\cos\beta dw^2+\sin\beta dw^3 = d\beta\w \theta^2+\cos\beta w_1^2\w \theta^1+\tau(1+\sin^2\beta)\theta^1\w \theta^2.
\end{eqnarray*}
We thus obtain
\be  \theta^1_2=\sin\beta(w_2^1-\tau\cos\beta\theta^2),\ \  \ \  \cos\beta w^1_2(e_2)=-\beta_1 -\tau(1 + \sin^2\beta), \label{theta12}  \ee
and thus
 \[\nabla_{e_1}e_1=\theta^2_1(e_1)e_2=-\sin\beta w^1_2(e_1)e_2,\ \ \ \ \ \ \  \ \ \ \nabla_{e_2}e_2=\theta^1_2(e_2)e_1=-(2\tau+\beta_1)\tan\beta e_1,\] 
\be \nabla_{e_2}e_1=\theta^2_1(e_2)e_2=(2\tau+\beta_1)\tan\beta e_2\ \ \ \  \mbox{and}\ \  \ \ \nabla_{e_1}e_2=\theta^1_2(e_1)e_1=\sin\beta w^1_2(e_1) e_1.\label{NNNB}\ee

Now, using equations \eqref{w_i^j}, \eqref{framesurface} and \eqref{cof2} we compute
\begin{eqnarray*}
\bar{D}e_3 &=& \bar{D}(-\cos\beta f_2+\sin\beta f_3)\\
&=&\sin\beta d\beta f_2-\cos\beta(w_2^1f_1+w_2^3f_3)+\cos\beta d\beta f_3+\sin\beta(w_3^1f_1+w_3^2f_2)\\
&=&(-\cos\beta w_2^1-\tau\sin^2\beta\theta^2)e_1 + (d\beta+\tau\theta^1)e_2,
\end{eqnarray*}
where $\bar{D}$ is the Levi-Civita connection on $E(\kappa,\tau)$. On the other hand, $\bar{D}e_3=\theta_3^1e_1+\theta^2_3e_2$, whence
\[ \label{connectoin1}
\theta^1_3=-\cos\beta w^1_2-\tau \sin^2\beta \theta^2\ \ \ \mbox{and}\ \ \ \theta^2_3=d\beta+\tau\theta^1.
\]
Hence, from the second equation of \eqref{theta12} and  $\theta^3_i=h_{ij}\theta^j$, we get
\be \label{secd}
h_{11}=\cos\beta w^1_2(e_1),\ \ \  h_{12}=\cos\beta w^1_2(e_2)+\tau\sin^2\beta = -\tau-\beta_1\ \ \mbox{and}\ \ h_{22} =-\beta_2. 
\ee
Since by the definition $2H=h_{11}+h_{22}$, we get (\ref{mean}) and then (\ref{NB}) and (\ref{sec}) follow from  (\ref{mean}), (\ref{NNNB}) and  (\ref{secd}) immediately.
\end{proof}

By using the formula of the Riemannian curvature tensor in \cite{Da}, one can obtain the Codazzi equations under the adapted frame. See the proof in \cite{CuiGomes}.
\begin{lem} (\cite{CuiGomes}) The Codazzi equations under the adapted frame are given by
\be\beta_1w^1_2(e_1)\cos\beta=\beta_2(2\tau+\beta_1)\label{Codazzi1}\ee and
\begin{eqnarray}\label{Codazzi2}
\nonumber0&=&\cos\beta e_2(w^1_2(e_1))+\sin\beta \cos\beta (w^1_2(e_1))^2+\beta_{11}+2(\tau+\beta_1)(2\tau+\beta_1)\tan\beta\\
&&+(\kappa-4\tau^2)\sin\beta \cos\beta.
\end{eqnarray}
\end{lem}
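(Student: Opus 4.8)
The plan is to evaluate the Codazzi equation $h_{ij|k}-h_{ik|j}=-\bar R^3_{ijk}$ (the content of \eqref{codazzi11}) explicitly in the adapted frame, reading off its two independent components $i=1$ and $i=2$. First I would expand the covariant derivatives $h_{ij|k}$ entering these two components. Using the definition \eqref{KKKK}, the coefficients \eqref{sec}, and the connection forms supplied by Lemma~\ref{second} (equivalently \eqref{NB}, which give $\theta^1_2(e_1)=(2H+\beta_2)\tan\beta$ and $\theta^1_2(e_2)=-(2\tau+\beta_1)\tan\beta$), this is a direct but bookkeeping-heavy computation. For example $e_1(h_{12})=-\beta_{11}$ supplies the second-order term, while differentiating $h_{11}=\cos\beta\,w^1_2(e_1)$ from \eqref{mean} supplies the terms $\cos\beta\,e_2(w^1_2(e_1))$ and $\sin\beta\cos\beta\,(w^1_2(e_1))^2$; the remaining connection contributions assemble into the factors $2(\tau+\beta_1)(2\tau+\beta_1)\tan\beta$ and $\beta_2(2\tau+\beta_1)$.

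Next I would compute the ambient curvature term $\bar R^3_{ijk}$. The essential point is that the curvature tensor of $E(\kappa,\tau)$ from \cite{Da} is written in the background frame $\{f_1,f_2,f_3\}$, whereas our frame is the $\beta$-rotation \eqref{framesurface} that fixes $f_1$ and rotates $f_2,f_3$. So I would substitute $e_1=f_1$, $e_2=\sin\beta\,f_2+\cos\beta\,f_3$, $e_3=-\cos\beta\,f_2+\sin\beta\,f_3$ into $\bar R$, using that the only nonvanishing sectional data of $E(\kappa,\tau)$ in the $f$-frame are the horizontal curvature $\kappa-3\tau^2$ and the two vertical curvatures $\tau^2$. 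The normal component $e_3=-\cos\beta\,f_2+\sin\beta\,f_3$ is orthogonal to $f_1$, and the bookkeeping then shows that the component feeding the $i=1$ equation carries exactly the factor $(\kappa-4\tau^2)\sin\beta\cos\beta$, whereas the component feeding the $i=2$ equation vanishes identically.

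Combining the two sides, the $i=1$ component yields the second-order equation \eqref{Codazzi2}, assembling $\beta_{11}$, the derivative $\cos\beta\,e_2(w^1_2(e_1))$, the term $\sin\beta\cos\beta\,(w^1_2(e_1))^2$, and the curvature term $(\kappa-4\tau^2)\sin\beta\cos\beta$. The $i=2$ component, whose curvature contribution is zero, collapses—once the second-order mixed derivatives are identified through the symmetry $\beta_{12}=\beta_{21}$—to the first-order relation \eqref{Codazzi1}. I expect the main obstacle to be the curvature step: transforming the tensor of \cite{Da} into the adapted frame entangles $f_2$ and $f_3$ through $\beta$, and one must track precisely which of the many terms survive and which cancel so that exactly the stated coefficients remain; in particular the delicate cancellation of the $(\kappa-3\tau^2)$ contributions against the $\tau^2$ contributions (leaving only the combination $\kappa-4\tau^2$) is where errors are most likely. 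A secondary delicate point is the sign bookkeeping of the connection terms in the covariant derivatives, since a single slip there would corrupt both resulting equations.
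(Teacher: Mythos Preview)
Your plan for \eqref{Codazzi2} is sound: the $i=1$ component $h_{11|2}-h_{12|1}=-\bar R^3_{112}$ indeed assembles into \eqref{Codazzi2}, and the ambient curvature contributes exactly the $(\kappa-4\tau^2)\sin\beta\cos\beta$ term as you anticipate.

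The gap is in your derivation of \eqref{Codazzi1}. You propose to reduce the $i=2$ component using ``the symmetry $\beta_{12}=\beta_{21}$'', but this is not an a~priori identity: by definition $\beta_{21}-\beta_{12}=e_1e_2\beta-e_2e_1\beta=[e_1,e_2]\beta$, and from \eqref{NB} one has $[e_1,e_2]=(2H+\beta_2)\tan\beta\,e_1-(2\tau+\beta_1)\tan\beta\,e_2$, so
\[
\beta_{21}-\beta_{12}=2(H\beta_1-\tau\beta_2)\tan\beta .
\]
Thus $\beta_{12}=\beta_{21}$ is \emph{equivalent} to \eqref{Codazzi1} (after using \eqref{mean}); invoking it to obtain \eqref{Codazzi1} is circular. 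Worse, if you carry the $i=2$ computation through honestly you obtain
\[
h_{21|2}-h_{22|1}=(\beta_{21}-\beta_{12})+2(\tau\beta_2-H\beta_1)\tan\beta ,
\]
and substituting the bracket identity just displayed collapses this to $0$. Since, as you correctly note, $\bar R^3_{212}=0$, the $i=2$ Codazzi relation is the tautology $0=0$ and carries no information whatsoever. So \eqref{Codazzi1} cannot be extracted from the covariant-derivative form of Codazzi in the way you describe. The argument in \cite{CuiGomes} (to which the paper defers) must proceed differently---most likely by computing $\bar\Omega^3_i$ directly from the $f$-frame structure equations, which brings in $dw^1_2$ and hence the way the adapted $\{f_1,f_2\}$ rotates along $M$, information that is invisible once you pass to the intrinsic derivatives $h_{ij|k}$.
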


At the end of this section, we shall derive some identities which will be useful in the next section. 
From (\ref{mean}) and (\ref{Codazzi1}), we have  $\beta_1(2H+\beta_2)=\beta_2(2\tau+\beta_1)$, that is\be \tau\beta_2=H\beta_1.\label{beta12}\ee
%This identity enables us to express the related derivatives of $\beta$ to $\beta_1$, $\beta_{11}$ and $\beta_{111}$. 
According to Lemma \ref{second}, we observe  that \beq \beta_{12}&=&e_2e_1\beta=[e_2,e_1]\beta+\beta_{21}=\nabla_{e_2}e_1\beta-\nabla_{e_1}e_2\beta+\beta_{21}\nonumber\\&=&(2\tau+\beta_1)\tan\beta\beta_2-(2H+\beta_2)\tan\beta\beta_1+\beta_{21}=\beta_{21}.\nonumber\label{beta111A}\eeq
By (\ref{beta12}), if $H$ is a constant, we have $\tau\beta_{21}=H\beta_{11}$ and $\beta_1\beta_{21}=\beta_2\beta_{11}$. Then a similar way shows that \[ \beta_{112}=e_2e_1(\beta_1)=[e_2,e_1]\beta_1+\beta_{121}=\nabla_{e_2}e_1\beta_1-\nabla_{e_1}e_2\beta_1+\beta_{121}=\beta_{121}.\label{beta112}\]
Thus by (\ref{beta12}) again, for the later use, we deduce that  \be \beta_{12}=\beta_{21}=(H/ \tau)\beta_{11},\ \ \beta_{22}=(H/ \tau)\beta_{12}=(H/ \tau)\beta_{21}=(H/ \tau)^2\beta_{11}\label{BBBd}\ee
and \be \beta_{122}=(H/ \tau)\beta_{112}=(H/ \tau)\beta_{121}=(H/ \tau)^2\beta_{111}.\label{beta113}\ee

\section{Simons' type formula for cmc surafces}

In this section we consider the tensor $\Phi:=A-HI$ with constant $H$. By Lemma \ref{second} and (\ref{beta12}), we have \beq |\Phi|^2&=&|A|^2-2H^2=(2H+\beta_2)^2+2(\tau+\beta_1)^2+\beta_2^2-2H^2\nonumber\\
&=&2(\tau+\beta_1)^2+2(H+\beta_2)^2=2[1+({H/\tau})^2](\tau+\beta_1)^2.\label{Phi2}\eeq

 We now start to compute the Laplacian of $|\Phi|^2$. By using (\ref{beta12}),  (\ref{BBBd}) and  (\ref{beta113}), we compute that
\beq \Delta |\Phi|^2&=&e_1e_1(|\Phi|^2)+e_2e_2(|\Phi|^2)-\nabla_{e_1}e_1 |\Phi|^2-\nabla_{e_2}e_2 |\Phi|^2\nonumber\\
&=&4[1+({H/\tau})^2][\beta_{11}^2+(\tau+\beta_1)\beta_{111}+\beta_{12}^2+(\tau+\beta_1)\beta_{122}\nonumber\\
&&+(2H+\beta_2)(\tau+\beta_1)\tan\beta\beta_{12}+(2\tau+\beta_1)(\tau+\beta_1)\tan\beta\beta_{11}]\nonumber\\
&=&4[1+({H/\tau})^2]^2[\beta_{11}^2+(\tau+\beta_1)\beta_{111}+(2\tau+\beta_1)(\tau+\beta_1)\tan\beta\beta_{11}].\label{DeltaPhi2}
\eeq

Next, we shall express $\beta_{11}$ and $\beta_{111}$ to the expressions involving  in $\beta_1$.
First, by Lemma \ref{second} we have $2H+\beta_2=\cos\beta w^1_2(e_1)$ and thus we get
\begin{equation*}\beta_{22}=-\tan\beta\beta_2(2H+\beta_2)+\cos\beta e_2(w^1_2(e_1)).
\end{equation*}
So, the Codazzi equation \eqref{Codazzi2} becomes
\beq
\beta_{11}+\beta_{22} &=& -\tan\beta[(2H+\beta_2)\beta_2+(2H+\beta_2)^2+2(\tau+\beta_1)(2\tau+\beta_1)]\nonumber\\
&&-(\kappa-4\tau^2)\sin\beta \cos\beta.\label{c2-H}
\eeq
Again, by using  (\ref{beta12}) and  (\ref{beta113}), the equation (\ref{c2-H}) becomes \be [1+({H/\tau})^2] \beta_{11}=-2[1+({H/\tau})^2](\tau+\beta_1)(2\tau+\beta_1)\tan\beta-(\kappa-4\tau^2)\sin\beta\cos\beta,\label{beta11}\ee
and  taking the derivative by $e_1$ gives us \beq [1+({H/\tau})^2] \beta_{111}&=&-2[1+({H/\tau})^2][(3\tau+2\beta_1)\beta_{11}\tan\beta+(\tau+\beta_1)(2\tau+\beta_1)\sec^2\beta\beta_1]\nonumber\\
&&-(\kappa-4\tau^2)\cos(2\beta)\beta_1\nonumber\\
&=&4[1+({H/\tau})^2](3\tau+2\beta_1)(\tau+\beta_1)(2\tau+\beta_1)\tan^2\beta+2(\kappa-4\tau^2)(3\tau+2\beta_1)\sin^2\beta\nonumber\\
&&-2[1+({H/\tau})^2](\tau+\beta_1)(2\tau+\beta_1)\sec^2\beta\beta_1-(\kappa-4\tau^2)\cos(2\beta)\beta_1.\label{beta111}\eeq
Substituting (\ref{beta11}) and (\ref{beta111}) into (\ref{DeltaPhi2}), by a straightforward computation we have
\beq \Delta |\Phi|^2&=&4[1+({H/\tau})^2]^2\Big\{\beta_{11}^2+2(\tau+\beta_1)^2(2\tau+\beta_1)[2(2\tau+\beta_1)\tan^2\beta-\beta_1]\Big\}\nonumber\\
&&+4[1+({H/\tau})^2](\kappa-4\tau^2)(\tau+\beta_1)[4(\tau+\beta_1)\sin^2\beta-\beta_1\cos^2\beta]. \label{DeltaPhi2F}\eeq

\begin{lem}\label{AAA} \be |\nabla A|^2=2[1+({H/\tau})^2]^2[\beta_{11}^2+4(\tau+\beta_1)^2(2\tau+\beta_1)^2\tan^2\beta].\label{Asquare}\ee
\end{lem}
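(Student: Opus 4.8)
The plan is to compute $|\nabla A|^2$ directly from the explicit coefficients of the second fundamental form given in Lemma~\ref{second}, namely $h_{11}=2H+\beta_2$, $h_{12}=h_{21}=-\tau-\beta_1$, $h_{22}=-\beta_2$, together with the covariant derivative formula \eqref{KKKK}, i.e. $h_{ij|k}\theta^k = dh_{ij}-h_{kj}\theta^k_i-h_{ik}\theta^k_j$, and the connection coefficients $\nabla_{e_1}e_1=-(2H+\beta_2)\tan\beta\, e_2$, $\nabla_{e_2}e_2=-(2\tau+\beta_1)\tan\beta\, e_1$, $\nabla_{e_2}e_1=(\beta_1+2\tau)\tan\beta\, e_2$, $\nabla_{e_1}e_2=(2H+\beta_2)\tan\beta\, e_1$. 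Since $H$ is constant, the derivatives of $h_{ij}$ reduce to derivatives of $\beta_1$ and $\beta_2$, and by \eqref{beta12}, \eqref{BBBd}, \eqref{beta113} everything can be expressed in terms of $\beta_1$, $\beta_{11}$, $\beta_{111}$ and the factor $1+(H/\tau)^2$.

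First I would write out the six components $h_{ij|k}$ ($ij \in \{11,12,22\}$, $k\in\{1,2\}$) explicitly. For instance, $h_{11|1}\theta^1 + h_{11|2}\theta^2 = d(2H+\beta_2) - 2h_{k1}\theta^k_1 = d\beta_2 - 2h_{21}\theta^2_1$, and one reads off $h_{11|1}=\beta_{12}+2(\tau+\beta_1)(2H+\beta_2)\tan\beta$ and $h_{11|2}=\beta_{22}$, using $\theta^2_1(e_1)=-(2H+\beta_2)\tan\beta$ from Lemma~\ref{second}. Similarly for the others. Then $|\nabla A|^2 = \sum_{i,j,k}h_{ij|k}^2$. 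Now I substitute $\beta_2 = (H/\tau)\beta_1$, $\beta_{12}=\beta_{21}=(H/\tau)\beta_{11}$, $\beta_{22}=(H/\tau)^2\beta_{11}$ from \eqref{beta12} and \eqref{BBBd}, and I expect the whole sum to collapse, after collecting the common factor $[1+(H/\tau)^2]^2$, into $2[1+(H/\tau)^2]^2[\beta_{11}^2 + 4(\tau+\beta_1)^2(2\tau+\beta_1)^2\tan^2\beta]$. The cross-terms between the $\beta_{11}$-type contributions and the $\tan\beta$-type contributions should cancel because of the Codazzi relation \eqref{Codazzi1}, $\beta_1(2H+\beta_2)\cos\beta = \beta_2(2\tau+\beta_1)$, which controls precisely the coupling of $\beta_{11}$ with the geometric term.

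The main obstacle is the bookkeeping: one must be careful that the mixed component $h_{12|1}$ versus $h_{12|2}$ and the use of the Codazzi equation \eqref{Codazzi1} together eliminate all $\beta_{11}\cdot(\tau+\beta_1)(2\tau+\beta_1)\tan\beta$ cross terms so that only the two squares survive, and one must track the factors of $\tan\beta=\sin\beta/\cos\beta$ correctly (the computation lives on $\mathcal{W}$ where $\cos\beta\neq 0$, then extends by continuity). A useful consistency check is to compare the resulting expression with \eqref{beta11}: that equation gives $[1+(H/\tau)^2]\beta_{11}$ in terms of $(\tau+\beta_1)(2\tau+\beta_1)\tan\beta$ and $(\kappa-4\tau^2)\sin\beta\cos\beta$, so substituting it back should reproduce the form of $|\nabla A|^2$ consistent with \eqref{DeltaPhi2F}, and in particular should make visible that $|\nabla A|^2=0$ exactly when $\beta_{11}=0$ and $(\tau+\beta_1)(2\tau+\beta_1)\tan\beta=0$, which is the parallel case flagged in Theorem~\ref{mainthm}. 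Once the six components are in hand, the identity is a finite algebraic simplification, so I would present the component list, then state the collapsed sum, indicating the one cancellation that uses \eqref{Codazzi1}.
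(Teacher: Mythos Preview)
Your approach is exactly the paper's: compute the six components $h_{ij|k}$ from \eqref{KKKK} and Lemma~\ref{second}, substitute the relations \eqref{beta12}--\eqref{BBBd}, and sum the squares. Two slips in your sample computation would derail the algebra, though. First, since $\theta^2_1(e_1)=-(2H+\beta_2)\tan\beta$, the term $-2h_{21}\theta^2_1(e_1)=2(\tau+\beta_1)\cdot\bigl(-(2H+\beta_2)\tan\beta\bigr)$ carries a minus sign, so in fact
\[
h_{11|1}=\beta_{21}-2(\tau+\beta_1)(2H+\beta_2)\tan\beta,
\]
not $+$. Second, $\theta^2_1$ also has a $\theta^2$-component, namely $\theta^2_1(e_2)=(2\tau+\beta_1)\tan\beta$, so
\[
h_{11|2}=\beta_{22}+2(\tau+\beta_1)(2\tau+\beta_1)\tan\beta,
\]
not just $\beta_{22}$. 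With these corrections (and the analogous ones for $h_{12|k}$, $h_{22|k}$) and the substitutions $\beta_2=(H/\tau)\beta_1$, $\beta_{21}=(H/\tau)\beta_{11}$, $\beta_{22}=(H/\tau)^2\beta_{11}$, one finds the identities $h_{12|2}=-h_{11|1}=-h_{22|1}$ and $h_{22|2}=-h_{12|1}$, whence $|\nabla A|^2=2(2h_{11|1}^2+h_{11|2}^2+h_{12|1}^2)$, and the cross terms between $\beta_{11}$ and $(\tau+\beta_1)(2\tau+\beta_1)\tan\beta$ cancel \emph{purely algebraically}---no further appeal to \eqref{Codazzi1} is needed beyond what is already encoded in \eqref{beta12}.
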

\begin{proof} By (\ref{KKKK}), (\ref{BBBd}) and Lemma \ref{second},  we compute that
\beq h_{11|1}&=&e_1(2H+\beta_2)-h_{21}\theta^2_1(e_1)-h_{12}\theta^2_1(e_1)\nonumber\\
&=&\beta_{21}-2(\tau+\beta_1)(2H+\beta_2)\tan\beta\nonumber\\
&=&(H/\tau)[\beta_{11}-2(\tau+\beta_1)(2\tau+\beta_1)\tan\beta].\nonumber
\eeq
A similar computation yields
\[h_{11|2}=(H/\tau)^2\beta_{11}+2(\tau+\beta_1)(2\tau+\beta_1)\tan\beta,\ \ \ \  h_{12|1}=-\beta_{11}-2(H/\tau)^2(\tau+\beta_1)(2\tau+\beta_1)\tan\beta\]
and the following identities hold true: \[h_{12|2}=-h_{11|1}=-h_{22|1}\  \ \ \mbox{and}\  \ \ h_{12|1}=-h_{22|2}.\]
Thus by definition,  \[|\nabla A|^2=h_{11|1}^2+h_{11|2}^2+h_{22|1}^2+h_{22|2}^2+2(h_{12|1}^2+h_{12|2}^2)=2(2h_{11|1}^2+h_{11|2}^2+h_{12|1}^2),\]
and Eq. (\ref{Asquare}) follows by a straightforward computation.
\end{proof} 

By (\ref{Phi2}), (\ref{DeltaPhi2F}), (\ref{Asquare}) and $|\nabla\Phi|^2=|\nabla A|^2$, we have
\beq {1\over 2}\Delta |\Phi|^2
&=&|\nabla\Phi|^2-|\Phi|^2[|\Phi|^2-2(H^2+\tau^2)]\nonumber\\
&&+(\kappa-4\tau^2)\Big\{4|\Phi|^2\sin^2\beta-2[1+({H/\tau})^2](2\tau+\beta_1)\beta_1\cos^2\beta\Big\}\nonumber\\
&&+2(\kappa-4\tau^2)[1+({H/\tau})^2]\tau\beta_1\cos^2\beta\nonumber\\
&=&|\nabla\Phi|^2-|\Phi|^2[|\Phi|^2-2(H^2+\tau^2)]\nonumber\\
&&+(\kappa-4\tau^2)[4|\Phi|^2\sin^2\beta-(|\Phi|^2-2(H^2+\tau^2))\cos^2\beta]\nonumber\\
&&+2(\kappa-4\tau^2)[1+({H/\tau})^2]\tau\beta_1\cos^2\beta. \label{DeltaPhi2FF}
\eeq
 We now deal with the last term in (\ref{DeltaPhi2FF}). Inspired by the computations in   \cite{Hu} and \cite{TU}, we prove the following lemma.

\begin{lem}\label{LLL}  Let $T:=f_3-\la f_3,N\ra N$ be the projection of the vertical vector field $f_3=\xi$ on $\mathcal{W}\subset M$. Then 
 \be {1\over2}\Delta|T|^2-div(\nabla_TT)=2\tau(\beta_1\cos^2\beta+2\tau\sin^2\beta). \label{CVC}\ee\end{lem}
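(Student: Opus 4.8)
The plan is to compute both $\Delta|T|^2$ and $\operatorname{div}(\nabla_T T)$ directly in the adapted frame on $\mathcal W$, using the explicit description of $T$, of the Levi-Civita connection $\nabla$ on $M$, and of the second fundamental form from Lemma \ref{second}. First I would identify $T$ in the adapted frame: since $f_3=\xi$ and, from \eqref{framesurface}, $e_2=\sin\beta f_2+\cos\beta f_3$, $e_3=-\cos\beta f_2+\sin\beta f_3$ and $N=e_3$, one has $\la f_3,N\ra=\cos\beta$ (note $C=\la\xi,N\ra=\cos\beta$ in this convention, consistent with $C=\sin\beta$ relative to the contact angle $\beta(p)$ measured from the tangent plane — I will be careful about which $\beta$ is which, matching the paper's convention $C=\la\xi,N\ra$ and $\cos\beta\,w^1_2(e_1)=2H+\beta_2$). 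Then $T=f_3-\cos\beta\,N=\sin\beta\,e_2$, so $|T|^2=\sin^2\beta$ and the statement becomes a computation purely in terms of $\beta$ and its $e_i$-derivatives.

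Next I would compute $\tfrac12\Delta|T|^2=\tfrac12\Delta(\sin^2\beta)$. Expanding, $\tfrac12\Delta(\sin^2\beta)=\cos^2\beta\,|\nabla\beta|^2-\sin^2\beta\,|\nabla\beta|^2+\sin\beta\cos\beta\,\Delta\beta$, where $|\nabla\beta|^2=\beta_1^2+\beta_2^2$ and $\Delta\beta=\beta_{11}+\beta_{22}-\nabla_{e_1}e_1\beta-\nabla_{e_2}e_2\beta$. Using \eqref{NB}, $\nabla_{e_1}e_1\beta=-(2H+\beta_2)\tan\beta\,\beta_2$ and $\nabla_{e_2}e_2\beta=-(2\tau+\beta_1)\tan\beta\,\beta_1$, and using \eqref{beta12} ($\tau\beta_2=H\beta_1$) together with \eqref{beta11} to replace $\beta_{11}+\beta_{22}$, everything collapses into an expression in $\beta_1$, $\cos\beta$, $\sin\beta$ alone (the $(\kappa-4\tau^2)$ term from \eqref{beta11} is key and, if the lemma is correct, must cancel — this is the first internal consistency check).

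For the second term, I would compute $\nabla_T T=\sin\beta\,\nabla_{e_2}(\sin\beta\,e_2)=\sin\beta\big(e_2(\sin\beta)e_2+\sin\beta\,\nabla_{e_2}e_2\big)=\sin\beta\cos\beta\,\beta_2\,e_2-\sin^2\beta(2\tau+\beta_1)\tan\beta\,e_1$, again via \eqref{NB}. Then $\operatorname{div}(\nabla_T T)=e_1(\text{coefficient of }e_1)+e_2(\text{coefficient of }e_2)+\la\nabla_{e_i}(\nabla_T T),e_i\ra$ corrections, i.e.\ I would use $\operatorname{div}X=\sum_i\la\nabla_{e_i}X,e_i\ra$ and carefully collect the connection terms $\nabla_{e_1}e_1,\nabla_{e_1}e_2,\nabla_{e_2}e_1,\nabla_{e_2}e_2$ from Lemma \ref{second}. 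Differentiating produces $\beta_{11},\beta_{12},\beta_{22}$ and $\beta_1^2,\beta_2^2$ terms, which I then reduce using \eqref{beta12}, \eqref{BBBd} and \eqref{beta11} to an expression in $\beta_1,\cos\beta,\sin\beta$. Subtracting the two computed quantities should give $2\tau(\beta_1\cos^2\beta+2\tau\sin^2\beta)$, with the $|\Phi|^2$-type and $(\kappa-4\tau^2)$-type terms cancelling between the two pieces.

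The main obstacle I anticipate is purely bookkeeping: keeping the many $\tan\beta$, $\sec^2\beta$ and $\cos^2\beta$ factors straight while repeatedly substituting the relations \eqref{beta12}--\eqref{beta111}, and making sure the $(\kappa-4\tau^2)$ contributions from $\beta_{11}$ (via \eqref{beta11}) genuinely cancel in the combination $\tfrac12\Delta|T|^2-\operatorname{div}(\nabla_T T)$ rather than in either term separately. A secondary subtlety is the sign/identification of the angle: one must confirm $T=\sin\beta\,e_2$ (not $\pm\cos\beta\,e_1$ etc.) and that $|T|^2=1-C^2$ with the paper's $C=\la\xi,N\ra$, since the final answer's coefficient structure depends on it. Once $T=\sin\beta\,e_2$ is pinned down, the rest is a finite, if tedious, verification using only Lemma \ref{second} and the identities already derived, and the claimed identity \eqref{CVC} — being free of $\kappa$ except through the forced cancellation — serves as its own sanity check.
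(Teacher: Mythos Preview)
Your plan is the paper's plan, but the very first identification is wrong and would derail the computation. From \eqref{framesurface} one has $f_3=\cos\beta\,e_2+\sin\beta\,e_3$, so with $N=e_3$ one gets $\la f_3,N\ra=\sin\beta$ (this \emph{is} the paper's $C=\sin\beta$), and therefore
\[
T=f_3-\sin\beta\,e_3=\cos\beta\,e_2,\qquad |T|^2=\cos^2\beta,
\]
not $T=\sin\beta\,e_2$. If you run your scheme with $|T|^2=\sin^2\beta$ and $\nabla_TT$ built from $\sin\beta\,e_2$, the two pieces will not combine to \eqref{CVC}.

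Second, once you use the correct $T$, the anticipated appeal to \eqref{beta11} and the ``$(\kappa-4\tau^2)$ cancellation'' is an unnecessary detour. The paper's argument is structurally cleaner: from $T=\cos\beta\,e_2$ and \eqref{NB} one finds
\[
\nabla_TT=-\tfrac12\sin(2\beta)\bigl(\nabla\beta+2\tau\,e_1\bigr),
\]
while $\tfrac12\Delta|T|^2=\tfrac12\Delta(\cos^2\beta)=\operatorname{div}\!\bigl(-\tfrac12\sin(2\beta)\nabla\beta\bigr)$. Hence the $\nabla\beta$ part of $\nabla_TT$ is exactly absorbed, $\Delta\beta$ never enters, and
\[
\tfrac12\Delta|T|^2-\operatorname{div}(\nabla_TT)=\operatorname{div}\bigl(\tau\sin(2\beta)\,e_1\bigr)
=2\tau\cos(2\beta)\,\beta_1+2\tau\sin^2\beta\,(2\tau+\beta_1),
\]
using $\operatorname{div}(e_1)=\la\nabla_{e_2}e_1,e_2\ra=(2\tau+\beta_1)\tan\beta$ from \eqref{NB}; this simplifies directly to $2\tau(\beta_1\cos^2\beta+2\tau\sin^2\beta)$. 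No Codazzi equation, no cmc hypothesis, and no $(\kappa-4\tau^2)$ term ever appears. Your route via \eqref{beta11}/\eqref{BBBd} would (after fixing $T$) eventually reach the same answer, but only under the extra assumption that $H$ is constant, which the lemma itself does not require.
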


%We deduce from (\ref{c2-H}) that
%\beq  \Delta\beta&=&\beta_{11}+\beta_{22}+[(2H+\beta_2)\beta_2+(2\tau+\beta_1)\beta_1]\tan\beta.\label{lap}  \eeq

\begin{proof} By (\ref{framesurface}), we can express $T=f_3-\sin\beta e_3=\cos\beta e_2$. Then we compute 
\[\nabla_TT=\cos\beta\nabla_{e_2}(\cos\beta e_2)=-\sin\beta\cos\beta [\beta_2e_2+(2\tau+\beta_1)e_1]=-(1/2)\sin(2\beta)(\nabla\beta+2\tau e_1).\]
Noticing that \[ {1\over2}\Delta|T|^2={1\over2}\Delta(\cos^2\beta)=-\cos(2\beta)|\nabla\beta|^2-\sin\beta\cos\beta\Delta\beta,\] we get
 \beq div(\nabla_TT)&=&\la \nabla_{e_i}\nabla_TT,e_i\ra=-\cos(2\beta)(|\nabla\beta|^2+2\tau \beta_1)-\sin\beta\cos\beta(\Delta\beta+2\tau\la \nabla_{e_i}e_1,e_i\ra)\nonumber\\
&=&{1\over2}\Delta|T|^2-2\tau(\beta_1\cos^2\beta+2\tau\sin^2\beta),\nonumber\eeq
which proves (\ref{CVC}).
\end{proof}

By (\ref{DeltaPhi2FF}), (\ref{CVC}) and noticing that the contact function is $C=\sin\beta$, we have 
\beq &&{1\over 2}\Delta |\Phi|^2-(\kappa-4\tau^2)[1+({H/\tau})^2]\Big({1\over2}\Delta|T|^2-div(\nabla_TT)\Big)\nonumber\\
&=&|\nabla\Phi|^2-|\Phi|^2[|\Phi|^2-2(H^2+\tau^2)]\nonumber\\
&&+(\kappa-4\tau^2)[4|\Phi|^2\sin^2\beta-(|\Phi|^2-2(H^2+\tau^2))\cos^2\beta]\nonumber\\
&&-4(\kappa-4\tau^2)(H^2+\tau^2)\sin^2\beta\nonumber\\
&=&|\nabla\Phi|^2-|\Phi|^2[|\Phi|^2-2(H^2+\tau^2)]\nonumber\\
&&+(\kappa-4\tau^2)[|\Phi|^2(5C^2-1)-2(H^2+\tau^2)(3C^2-1)].\label{DP}\eeq

Although the above formula is derived on $\mathcal{W}\subset M$, it can be extended to the whole $M$ since the complimentary  $\mathcal{W}^C$ has empty interior.

\begin{rem} The Berger sphere is the unit sphere $\Bbb{S}^3$ endowed with the metric
\[ \la X,Y\ra=\frac{4}{\kappa}\Big[\la X,Y\ra_{\Bbb{S}^3}+\Big(\frac{4\tau^2}{\kappa}-1\Big)\la X,\xi \ra_{\Bbb{S}^3} \la Y,\xi \ra_{\Bbb{S}^3}\Big],\label{metric1}\]
where $\langle\, , \,\rangle_{\Bbb{S}^3}$ stands for the standard metric on $\Bbb{S}^{3}$, $\kappa>0$ and $\tau\neq0$. Although in this paper we assume $\kappa\neq4\tau^2$, the formula (\ref{DP}) still holds for $\kappa=4\tau^2$. By taking $\kappa=4\tau^2=4c>0$, the sphere is the Riemannian space form $\Bbb{S}^3(c)$ with constant sectional curvature $c$. In this case,  (\ref{DP}) reduces to 
\[{1\over 2}\Delta |\Phi|^2=|\nabla\Phi|^2-|\Phi|^2[|\Phi|^2-2(H^2+c)],\]
which is the well known formula in \cite{NoS} by observing that $tr\Phi^3=0$ for surfaces.
\end{rem}

{\bf Proof of Theorem \ref{mainthm}}:
The Simons' type formula (\ref{Simon}) is a direct consequence by integrating  (\ref{DP})  on $M$, no matter whether $M$ is orientable or not. The equality holds if and only if $|\nabla A|=|\nabla \Phi|=0$, and this implies that $\sin\beta=0$ from (\ref{beta11}) and (\ref{Asquare}). Therefore, the vertical Killing vector field of the fibration  $\Pi:E(\kappa,\tau)\rightarrow\Bbb{M}^2(\kappa)$ is tangent to $M$,  so $\Pi(M)$ is a closed curve $\gamma$ in $S^2$ and then $M=\Pi^{-1}(\gamma)$ is the Hopf cylinder in $E(\kappa,\tau)$. In this case, $|A|^2=2(2H^2+\tau^2)$ follows from (\ref{Phi2}) immediately. \qed
 
\begin{rem} The authors in \cite{BeDi} proved that the Hopf cylinder is the only surface in $E(\kappa,\tau)$ ($\tau\neq 0$) with parallel second fundamental form. This fact can be used to prove the case of the equality in Theorem \ref{mainthm} instead of using Eqs.  (\ref{beta11}) and (\ref{Asquare}).
\end{rem}

As an application, we shall use Theorem \ref{mainthm}  to give a pinching result of the second fundamental form.  Observe that in the case of $\kappa>4\tau^2$, the equation 
\[x^2-[2(H^2+\tau^2)+(\kappa-4\tau^2)(5C^2-1)]x+2(\kappa-4\tau^2)(H^2+\tau^2)(3C^2-1)=0\]
has two distinct real solutions: 
\[x_1=a(\kappa,\tau,H,C):=[2(H^2+\tau^2)+(\kappa-4\tau^2)(5C^2-1)-\sqrt{\rho}]/2\]
and \[x_2=b(\kappa,\tau,H,C):=[2(H^2+\tau^2)+(\kappa-4\tau^2)(5C^2-1)+\sqrt{\rho}]/2,\]
where \[\rho=4(H^2+\tau^2)^2+4(H^2+\tau^2)(\kappa-4\tau^2)(1-C^2)+(\kappa-4\tau^2)^2(5C^2-1)^2.\]
Obviously, it holds the relation  $a(\kappa,\tau,H,C)\leq H^2+\tau^2<2(H^2+\tau^2)\leq b(\kappa,\tau,H,C) $. By Theorem \ref{mainthm} and the identity $|\Phi|^2=|A|^2-2H^2$, we get immediately

\begin{cor}\label{maincor}
Let $f:M\rightarrow E(\kappa,\tau)$ ($\kappa>4\tau^2$ and $\tau\neq 0$) be an immersion of a compact cmc $H$-surface. If  $a(\kappa,\tau,H,C)+2H^2\leq |A|^2\leq b(\kappa,\tau,H,C)+2H^2$,  then $M$ is a Hopf cylinder with $|A|^2=2(2H^2+\tau^2)$. In particular,  if  $3H^2+\tau^2\leq |A|^2\leq 2(2H^2+\tau^2)$,  then $M$ is a Hopf cylinder with $|A|^2=2(2H^2+\tau^2)$. 
\end{cor}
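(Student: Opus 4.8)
The plan is to read the bracketed integrand of the Simons' type inequality \eqref{Simon} as a quadratic polynomial in $|\Phi|^2$ and use the pinching hypothesis to force equality in \eqref{Simon}. Precisely, put
\[ p(x) = x^2 - \big[2(H^2+\tau^2)+(\kappa-4\tau^2)(5C^2-1)\big]x + 2(\kappa-4\tau^2)(H^2+\tau^2)(3C^2-1), \]
understood at each point $q\in M$ with $C=C(q)$; then \eqref{Simon} says exactly $\int_M p(|\Phi|^2)\,d\sigma\geq 0$. Since $\kappa>4\tau^2$ the discriminant $\rho$ displayed before the corollary is positive, so $p$ factors as $p(x)=(x-a(\kappa,\tau,H,C))(x-b(\kappa,\tau,H,C))$ with $a(\kappa,\tau,H,C)\leq b(\kappa,\tau,H,C)$.

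First I would use the identity $|\Phi|^2=|A|^2-2H^2$ to rewrite the hypothesis $a(\kappa,\tau,H,C)+2H^2\leq |A|^2\leq b(\kappa,\tau,H,C)+2H^2$ as the pointwise pinching $a(\kappa,\tau,H,C)\leq |\Phi|^2\leq b(\kappa,\tau,H,C)$ on $M$. Since an upward-opening parabola is non-positive between its two roots, this yields $p(|\Phi|^2)\leq 0$ at every point of $M$. Combining this with the integral inequality $\int_M p(|\Phi|^2)\,d\sigma\geq 0$ and the continuity of the integrand forces $p(|\Phi|^2)\equiv 0$ on $M$; in particular $\int_M p(|\Phi|^2)\,d\sigma=0$, so equality holds in \eqref{Simon}. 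By the equality statement of Theorem \ref{mainthm}, $f$ then has parallel second fundamental form, $M$ is a Hopf cylinder, and $|A|^2=2(2H^2+\tau^2)$. (Consistently, in the equality case $C\equiv 0$, for which $\rho=\big[2(H^2+\tau^2)+(\kappa-4\tau^2)\big]^2$ and $b(\kappa,\tau,H,0)=2(H^2+\tau^2)=|A|^2-2H^2$.)

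For the ``in particular'' claim only an inclusion of intervals is needed. From the relation $a(\kappa,\tau,H,C)\leq H^2+\tau^2<2(H^2+\tau^2)\leq b(\kappa,\tau,H,C)$ recorded above one gets $a(\kappa,\tau,H,C)+2H^2\leq 3H^2+\tau^2$ and $b(\kappa,\tau,H,C)+2H^2\geq 4H^2+2\tau^2=2(2H^2+\tau^2)$, so the pinching $3H^2+\tau^2\leq |A|^2\leq 2(2H^2+\tau^2)$ implies the general pinching, and the conclusion follows from the first part.

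There is no serious obstacle once Theorem \ref{mainthm} is available; the only subtlety is that $C$, hence $a$ and $b$, are functions on $M$ rather than constants, so the pinching must be applied pointwise and the vanishing of $p(|\Phi|^2)$ deduced from a non-positive integrand with nonnegative integral. The one point worth isolating is the essentially bookkeeping observation that the expression in braces in \eqref{Simon} is precisely $p(|\Phi|^2)$, which is exactly why $a(\kappa,\tau,H,C)$ and $b(\kappa,\tau,H,C)$ are the natural pinching bounds.
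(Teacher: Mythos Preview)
Your argument is correct and is exactly the approach the paper intends: the corollary is stated as an immediate consequence of Theorem \ref{mainthm} together with $|\Phi|^2=|A|^2-2H^2$ and the relation $a(\kappa,\tau,H,C)\leq H^2+\tau^2<2(H^2+\tau^2)\leq b(\kappa,\tau,H,C)$, and you have supplied precisely the intended details (pointwise nonpositivity of the quadratic integrand under the pinching, forcing equality in \eqref{Simon}).
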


\begin{rem} When $H=0$, in the case of $\kappa>4\tau^2$, the authors in \cite{Hu} obtained that $M$ is a Clifford torus in the Berger sphere under the assumption that $a(\kappa,\tau,0,C)\leq |A|^2\leq b(\kappa,\tau,0,C)$,  by observing the fact that the Clifford torus is the only minimal Hopf torus in the Berger sphere (see \cite{BaFe} or \cite{Pink}). We also mention that there exist cmc (even minimal) tori which are not Hopf tori in the Berger sphere (see \cite{To}).
\end{rem}

\begin{rem} Similar to the Remark 4.8 in \cite{Hu}, the case of $\kappa<4\tau^2$ for Corollary \ref{maincor} is still open.
\end{rem}


\begin{thebibliography}{888}
\bibitem{AbRosenb} U. Abresch, H. Rosenberg,  {\it A Hopf differential for constant mean curvature surfaces in
$\Bbb{S}^2\times\Bbb{R}$ and $\Bbb{H}^2\times\Bbb{R}$},  Acta Math. 193 (2004) 141-174.

\bibitem{AbRosenb2} U.  Abresch, H.  Rosenberg, {\it Generalized Hopf differentials}, Mat. Contemp. 28 (2005) 1-28.

\bibitem{Aebischer} B.  Aebischer, et al. {\it Symplectic Geometry. An introduction based on the seminar in Bern},
1992. Prog. Math., vol. 124, Birkhauser Verlag, Basel, 1994.

\bibitem{BaFe} M. Barros and A. Ferr$\acute{a}$ndez, {\it Willmore energy estimates in conformal Berger spheres}, Chaos Solitons Fractals 44 (7) (2011) 515-521.

\bibitem{brs} A. Barros, E. Ribeiro, J. Silva,  {\it Uniqueness of quasi-Einstein metrics on 3-dimensional homogeneous manifolds}, Diff. Geom. and its Appl. 35 (2014) 60-73.

\bibitem{BeDi} Mohamed Belkhelfa, Franki Dillen, and Jun-Ichi Inoguchi, {\it Surfaces with parallel second fundamental form in Bianchi-Cartan-Vranceanu spaces, PDEs, submanifolds and affine differential geometry} (Warsaw, 2000), Banach Center Publ., vol. 57, Polish Acad. Sci. Inst. Math., Warsaw, (2002) 67-87.

\bibitem{CCK}  S. S. Chern, M. do Carmo, and S. Kobayashi, {\it Minimal submanifolds of a sphere with second
fundamental form of constant length}, Functional Analysis and Related Fields (Proc. Conf. for M. Stone, Univ. Chicago, Chicago, Ill., 1968), Springer, New York, (1970) 59-75.

\bibitem{CuiGomes}  N. Cui, J. N. V.  Gomes, {\it Rigidity of cmc surface in the Berger sphere},  Archiv der Mathematik, 104 (2015)  289-300.

\bibitem{Da1} B. Daniel,  {\it  Isometric immersions into $\Bbb{S}^n\times \Bbb{R}$ and $\Bbb{H}^n\times \Bbb{R}$ and applications to minimal surfaces}, Trans. Amer. Math. Soc. 361 (12) (2009)  6255-6282.

\bibitem{Da}  B. Daniel, {\it Isometric immersions into 3-dimensional homogeneous manifolds}, Comment. Math. Helv. 82 (2007) 87-131.



\bibitem{FeMi} I. Fern$\acute{a}$ndez, P.  Mira, {\it A characterization of constant mean curvature surfaces in homogeneous 3-manifolds}, Differential Geom. Appl. 25 (3) (2007) 281-289.

\bibitem{Gomes} J. N. V. Gomes,  {\it The Gaussian curvature via the contact angle of immersed surfaces into the Euclidean three sphere}, Diff. Geom. and its Appl. 31 (2013)  691-697.

\bibitem{Hu}  Z. Hu, D. Lyu, J. Wang, {\it On rigidity phenomena of compact surfaces in homogeneous 3-manifolds}, Proc. Amer. Math. Soc.  143 (2015) 3097-3109.

\bibitem{MePe} William H. Meeks III and Joaqu$\acute{i}$n P$\acute{e}$rez, {\it Constant mean curvature surfaces in metric Lie groups, Geometric analysis: partial differential equations and surfaces}, Contemp. Math., vol. 570, Amer. Math. Soc., Providence, RI, (2012) 25-110.

\bibitem{NoS} K. Nomizu,  B.  Smyth,{\it A formula of Simons' type and hypersurfaces with constant mean curvature}, J. Diff. Geom. 3 (1969) 367-377.

%\bibitem{Petersen} P.  Petersen,  {\it Riemannian geometry}. New York, Springer-Verlag, Graduate Texts in Mathematics, v. 171, 1998.

\bibitem{Pink} U. Pinkall, {\it Hopf tori in $\Bbb{S}^3$}, Invent. Math. 81 (2) (1985) 379-386.

\bibitem{SoTo}  R. Souam,  E. Toubiana, {\it Totally umbilic surfaces in homogeneous $3$-manifolds}, Comment. Math. Helv. 84 (3) (2009)  673-704

\bibitem{TU} F. Torralbo,  F. Urbano, {\it On the Gauss curvature of compact surfaces in homogeneous $3$-manifolds}, Proc. Amer. Math. Soc. 138 (7) (2010) 2561-2567.

\bibitem{To} F. Torralbo,  {\it Compact minimal surfaces in the Berger spheres}, Ann. Glob. Anal. Geom. 41 (2012) 391-405.

\bibitem{Tor} F. Torralbo, {\it Rotationally invariant constant mean curvature surfaces in homogeneous $3$-manifolds}, Diff. Geom. and its Appl. 28 (2010) 593-607.

\bibitem{TorUr} F. Torralbo,  F. Urbano,  {\it Compact stable constant mean curvature surfaces in homogeneous 3-manifolds}, Indiana Univ. Math. J. 61 (3) (2012) 1129-1156.

%\bibitem{We} Joel L. Weiner,{\it Flat tori in $S^3$ and their Gauss maps}, Proc. London Math. Soc. (3) 62 (1991), no. 1, 54-76.

\end{thebibliography}
\end{document}